\newtheorem{theorem}{Theorem}
\newtheorem{corollary}{Corollary}
\newtheorem{lemma}[theorem]{Lemma}
\def\CW { \mathcal{W}}
\def\RE {\mathbb{R}}
\title{Computing Kantorovich-Wasserstein Distances on $d$-dimensional histograms using $(d+1)$-partite graphs}
\author{
	       Gennaro Auricchio, Stefano Gualandi, Marco Veneroni \\
              Universit\`a degli Studi di Pavia, Dipartimento di Matematica ``F. Casorati"\\
			  \texttt{gennaro.auricchio01@universitadipavia.it,} \\
			  \texttt{stefano.gualandi@unipv.it, marco.veneroni@unipv.it}\\
	\AND
	Federico Bassetti \\
              Politecnico di Milano, Dipartimento di Matematica \\
              \texttt{federico.bassetti@polimi.it}\\
}
\begin{document}
	
\maketitle
	
\begin{abstract}
	
	This paper presents a novel method to compute the exact Kantorovich-Wasserstein distance between a pair of $d$-dimensional histograms having $n$ bins each. 
	We prove that this problem is equivalent to an uncapacitated minimum cost flow problem on a $(d+1)$-partite graph
	with $(d+1)n$ nodes and $dn^{\frac{d+1}{d}}$ arcs, whenever the cost is separable along the principal $d$-dimensional directions.
	We show numerically the benefits of our approach by computing the Kantorovich-Wasserstein distance of order 2
	among two sets of instances: gray scale images and $d$-dimensional bio medical histograms. 
	On these types of instances, our approach is competitive with state-of-the-art optimal transport algorithms.
\end{abstract}

\section{Introduction}
The computation of a measure of similarity (or dissimilarity) between pairs of objects is a crucial subproblem 
in several applications in Computer Vision \cite{Rubner98,Rubner2000,Pele2009}, Computational Statistic \cite{Bickel2001}, Probability \cite{Bassetti2006,Bassetti2006b},
and Machine Learning \cite{Solomon2014,Cuturi2014,Frogner2015,Arjovsky2017}.
In mathematical terms, in order to compute the similarity between a pair of objects, we want to compute a {\it distance}.
If the distance is equal to zero the two objects are considered to be equal; the more the two objects are different,
the greater is their distance value. For instance, the Euclidean norm is the most used distance function to compare a pair of points in $\mathbb{R}^d$.
Note that the Euclidean distance requires only $O(d)$ operations to be computed. 
When computing the distance between complex discrete objects, 
such as for instance a pair of discrete measures, a pair of images,
a pair of $d$-dimensional histograms, or a pair of clouds of points, the Kantorovich-Wasserstein distance \cite{Villani2008,Vershik2013} has proved to be 
a relevant distance function \cite{Rubner98}, which has both nice mathematical properties and useful practical implications.
Unfortunately, computing the Kantorovich-Wasserstein distance requires the solution of an optimization problem.
Even if the optimization problem is polynomially solvable, the size of practical instances to be solved is very large,
and hence the computation of Kantorovich-Wasserstein distances implies an important computational burden.

The optimization problem that yields the Kantorovich-Wasserstein distance can be solved with different methods.
Nowadays, the most popular methods are based on (i) the Sinkhorn's algorithm \cite{Cuturi2013,Solomon2015,Altschuler2017}, 
which solves (heuristically) a regularized version of the basic optimal transport problem,
and (ii) Linear Programming-based algorithms \cite{Flood1953,Goldberg1989,Orlin}, which exactly solve the basic optimal transport problem
by formulating and solving an equivalent uncapacitated minimum cost flow problem. For a nice overview of both computational approaches,
we refer the reader to Chapters 2 and 3 in \cite{Peyre2018}, and the references therein contained.

In this paper, we propose a Linear Programming-based method to speed up the computation of Kantorovich-Wasserstein distances of order 2,
which exploits the structure of the ground distance to formulate an uncapacitated minimum cost flow problem.
The flow problem is then solved with a state-of-the-art implementation of the well-known Network Simplex algorithm \cite{Kovacs2015}.

Our approach is along the line of research initiated in \cite{LingOkada2007}, where the authors proposed a very efficient method
to compute Kantorovich-Wasserstein distances of order 1 (i.e., the so--called {\it Earth Mover Distance}), 
whenever the ground distance between a pair of points is the $\ell_1$ norm.
In \cite{LingOkada2007}, the structure of the $\ell_1$ ground distance and of regular $d$-dimensional histograms is exploited to 
define a very small flow network. More recently, this approach has been successfully generalized in \cite{Bassetti2018} to the case of 
$\ell_\infty$ and $\ell_2$ norms, providing both exact and approximations algorithms, which are able to compute distances
between pairs of $512 \times 512$ gray scale images. The idea of speeding up the computation of Kantorovich-Wasserstein distances by defining a minimum
cost flow on smaller structured flow networks is also used in \cite{Pele2009}, where a truncated distance is used as ground distance in place of a $\ell_p$ norm.

The outline of this paper is as follows. Section 2 reviews the basic notion of discrete optimal transport and fixes the notation.
Section 3 contains our main contribution, that is, Theorem 1 and Corollary 2, which permits to speed-up the computation
of Kantorovich-Wasserstein distances of order 2 under quite general assumptions.
Section 4 presents numerical results of our approaches, compared with the Sinkhorn's algorithm as implemented in \cite{Cuturi2013} 
and a standard Linear Programming formulation on a complete bipartite graph \cite{Rubner98}. 
Finally, Section 5 concludes the paper.

\section{Discrete Optimal Transport: an Overview}
Let $X$ and $Y$ be two discrete spaces. 
Given two probability vectors $\mu$ and $\nu$ defined on $X$ and $Y$, respectively, and a cost $c : X \times Y \to \RE_+$, 
the {\it Kantorovich-Rubinshtein  functional} between $\mu$ and $\nu$ is defined as
\begin{equation}
\label{eq:kantorovich}
		\CW_c(\mu,\nu)= \inf_{ \pi \in \Pi(\mu,\nu)} \sum_{ (x,y) \in X\times Y} c(x,y) \pi(x,y)
\end{equation}
where $\Pi(\mu,\nu)$ is the set of all the probability measures on $X \times Y$ with marginals $\mu$ and $\nu$, i.e. 
the probability measures $\pi$ such that $\sum_{y \in Y} \pi(x,y)=\mu(x)$ and $\sum_{x \in X} \pi(x,y)=\nu(y),$
for every $(x,y)$ in $X \times Y$.
 Such probability measures are sometimes called transport plans or couplings for $\mu$ and $\nu$. 
An important special case is when $X=Y$ and the cost function $c$ is a distance on $X$. In this case  
$\CW_c$ is a distance on the simplex of probability vectors on $X$,  also known as {\it Kantorovich-Wasserstein distance} of order $1$. 

We remark that {\bf the Kantorovich-Wasserstein distance of order $p$} can be defined, more in general, 
for arbitrary probability measures on a metric space $(X,\delta)$ by 
\begin{equation}\label{wpgeneral}
 W_p(\mu,\nu):=\left(\inf_{ \pi \in \Pi(\mu,\nu)} \int_{ X\times X} \delta^p(x,y) \pi(dx dy)\right)^{\min(1/p,1)}
\end{equation}
where now $\Pi(\mu,\nu)$ is the set of all probability measures on the Borel sets of $X \times X$ that have marginals $\mu$ and $\nu$, see, e.g., \cite{AGS}.
The infimum in \eqref{wpgeneral} is attained, and any probability $\pi$ which realizes the minimum is called an {\it optimal transport plan}.

The Kantorovich-Rubinshtein transport problem in the discrete setting can be seen as a special case of 
the following Linear Programming problem, where we assume now that $\mu$ and $\nu$ are 
generic vectors of dimension $n$, with positive components, 
\begin{align}
\label{p1:funobj} (P) \quad \min \quad & \sum_{x \in X}\sum_{y \in Y} c(x,y) \pi(x,y) \\
\mbox{s.t.} \quad 
\label{p1:supply} & \sum_{y \in Y} \pi(x,y) \leq \mu(x) & \forall x \in X \\
\label{p1:demand} & \sum_{x \in X} \pi(x,y)  \geq \nu(y) & \forall y \in Y \\
\label{p1:posvar} & \pi(x,y) \geq 0.
\end{align}
\noindent If $\sum_{x } \mu(x) = \sum_{y} \nu(y)$ we have the so-called  {\it balanced} transportation problem, 
otherwise the transportation problem is said to be {\it unbalanced} \cite{Liero2018,Chizat2016}.
For balanced optimal transport problems, constraints \eqref{p1:supply} and \eqref{p1:demand} must be satisfied with equality, and the problem 
reduces to the Kantorovich transport problem (up to normalization of the vectors $\mu$ and $\nu$).

Problem (P) is related to the so-called  {\it Earth Mover's distance}.
In this case, $X,Y \subset \RE^d$, $x$ and $y$ are the centers of two data clusters, and
$\mu(x)$ and $\nu(y)$ give the number of points in the respective cluster. 
Finally, $c(x,y)$ is some measure of dissimilarity between the two clusters $x$ and $y$.
Once the optimal transport  $\pi^*$ is determined, the Earth Mover's distance
between $\mu$ and $\nu$ is defined as (e.g., see \cite{Rubner98})
\[
EMD(\mu,\nu)= \frac{\sum_{x \in X}\sum_{y \in Y} c(x,y) \pi^*(x,y)}{\sum_{x \in X}\sum_{y \in Y}  \pi^*(x,y)}.
\]

Problem (P) can be formulated as an uncapacitated minimum cost flow problem on a bipartite graph defined as follows \cite{Ahuja}.
The bipartite graph has two partitions of nodes: the first partition has a node for each point $x$ of $X$, and the second partition has a node for each point $y$ of $Y$.
Each node $x$ of the first partition has a supply of mass equal to $\mu(x)$,
each node of the second partition has a demand of $\nu(y)$ units of mass.
The bipartite graph has an (uncapacitated) arc for each element in the Cartesian product $X \times Y$ having cost equal to $c(x,y)$.
The minimum cost flow problem defined on this graph yields the optimal transport plan $\pi^*(x,y)$, which indeed is an optimal solution of problem \eqref{p1:funobj}--\eqref{p1:posvar}.
For instance, in case of a regular 2D dimensional histogram of size $N \times N$, that is, having $n=N^2$ bins, 
we get a bipartite graph with $2N^2$ nodes and $N^4$ arcs (or $2n$ nodes and $n^2$ arcs). Figure \ref{fig1}--\subref{fig1a} shows an example for a $3 \times 3$ histogram,
and Figure \ref{fig1}--\subref{fig1b} gives the corresponding complete bipartite graph.

\begin{figure*}[t!]
  \centering
  \subfigure[]{\label{fig1a}\includegraphics[height=6cm]{./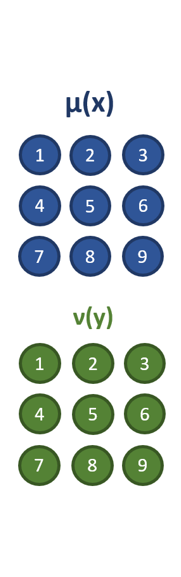}}\qquad \qquad
  \subfigure[]{\label{fig1b}\includegraphics[height=6cm]{./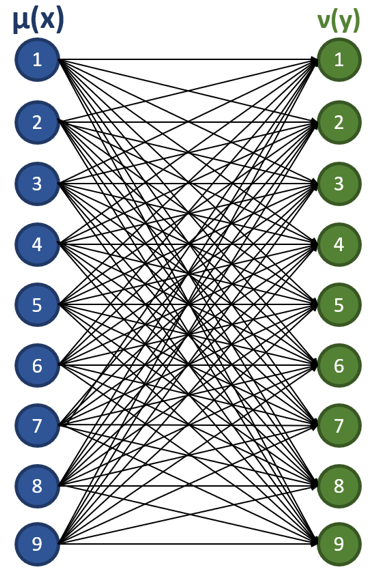}}\qquad \qquad
  \subfigure[]{\label{fig1c}\includegraphics[height=6cm]{./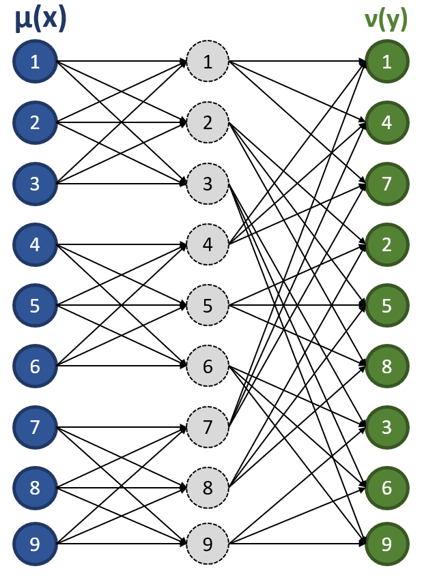}}
  \caption{\subref{fig1a} Two given 2-dimensional histograms of size $N\times N$, with $N=3$; \subref{fig1b} Complete bipartite graph with $N^4$ arcs; \subref{fig1c}: 3-partite graph with $(d+1)N^3$ arcs.\label{fig1}}
\end{figure*}

In this paper, we focus on the case $p=2$ in equation \eqref{wpgeneral} and the ground distance function $\delta$ is the Euclidean norm $\ell_2$, that is
the Kantorovich-Wasserstein distance of order $2$, which is denoted by $W_2$. We provide, in the next section,
an equivalent formulation on a smaller $(d+1)$-partite graph.


\section{Formulation on $(d+1)$-partite Graphs}	
For the sake of clarity, but without loss of generality, we present first our construction considering 2-dimensional histograms and the $\ell_2$ Euclidean ground distance.
Then, we discuss how our construction can be generalized to any pair of $d$-dimensional histograms.

Let us consider the following flow problem: let $\mu$ and $\nu$ be two probability measures over a $N \times N$ regular grid denoted by $G$.
In the following paragraphs, we use the notation sketched in Figure \ref{fig2}. In addition, we define the set $U:=\{1,\dots,N\}$.

\begin{figure}[t!]
\floatbox[{\capbeside\thisfloatsetup{capbesideposition={right,center},capbesidewidth=0.55\textwidth}}]{figure}[\FBwidth]
{\caption{Basic notation used in Section 3: in order to send a unit of flow from point $(a,j)$ to point $(i,b)$, we either send
  a unit of flow directly along arc $((a,j),(i,b))$ of cost $c((a,j),(i,b))=(a-i)^2 + (j-b)^2$, or, we first send a unit of flow
  from $(a,j)$ to $(i,j)$, and then from $(i,j)$ to $(i,b)$, having total cost $c((a,j),(i,j)) + c((i,j),(i,b)) = (a-i)^2 + (j-j)^2 + (i-i)^2 + (j-b)^2 = (a-i)^2 + (j-b)^2 = c((a,j),(i,b))$. Indeed, the cost of the two different path is exactly the same.}
  \label{fig2}}
{\includegraphics[width=0.35\textwidth]{./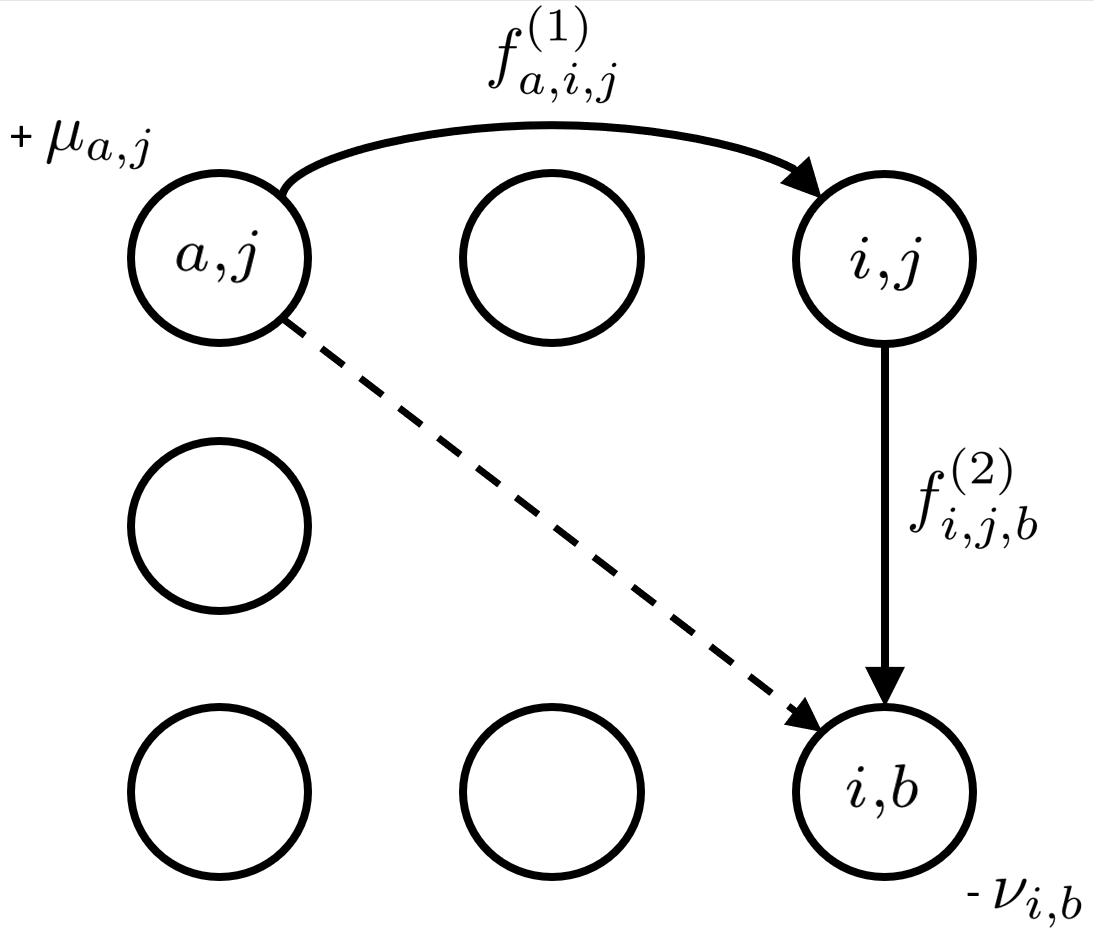}}
\end{figure}

Since we are considering the $\ell_2$ norm as ground distance, we minimize the functional	
\begin{equation}
\label{formulationflow}
{R}:(F_1,F_2)\rightarrow \sum_{i,j=1}^N \left[\sum_{a=1}^N (a-i)^2f^{(1)}_{a,i,j}+ \sum_{b=1}^N (j-b)^2f^{(2)}_{i,j,b}\right]
\end{equation}		
among all $F_i = \{f^{(i)}_{a,b,c}\}$, with  $a,b,c \in \{1,...,N\}$ real numbers (i.e., flow variables) satisfying the following constraints
\begin{eqnarray}
\label{condizionecompmu}	\sum_{i=1}^N f^{(1)}_{a,i,j}&=&\mu_{a,j}, \qquad\qquad \forall a,j \in U \times U \\
\label{condizionecompnu}	\sum_{j=1}^N f^{(2)}_{i,j,b}&=&\nu_{i,b}, \qquad\qquad \forall i,b \in U \times U \\
\label{incollamento}    	\sum_{a}f^{(1)}_{a,i,j}&=&\sum_{b}f^{(2)}_{i,j,b}, \qquad\forall i,j \in U \times U, a \in U, b \in U.
\end{eqnarray}
\noindent Constraints \eqref{condizionecompmu} impose that the mass $\mu_{a,j}$ at the point $(a,j)$ is moved to the points $(k,j)_{k=1,...,N}$.
Constraints \eqref{condizionecompnu} force the point $(i,b)$ to receive from the points $(i,l)_{l=1,...,N}$ a total mass of $\nu_{i,b}$.
Constraints \eqref{incollamento} require that all the mass that goes from the points $(a,j)_{a=1,...,N}$ to the point $(i,j)$ is moved to the points $(i,b)_{b=1,...,N}$.
We call a pair $(F_1,F_2)$ satisfying the constraints \eqref{condizionecompmu}--\eqref{incollamento} a {\it feasible flow} between $\mu$ and $\nu$. 
We denote by $\mathcal{F}(\mu,\nu)$ the set of all feasible flows between $\mu$ and $\nu$.
			
Indeed, we can formulate the minimization problem defined by \eqref{formulationflow}--\eqref{incollamento} 
as an uncapacitated minimum cost flow problem on a tripartite graph $T=(V,A)$. 
The set of nodes of $T$ is $V:=V^{(1)}\cup V^{(2)} \cup V^{(3)}$, where $V^{(1)}, V^{(2)}$ and $V^{(3)}$ are the nodes corresponding to three $N\times N$ regular grids. 
We denote by $(i,j)^{(l)}$ the node of coordinates $(i,j)$ in the grid $V^{(l)}$. 
We define the two disjoint set of arcs between the successive pairs of node partitions as
\begin{eqnarray}
	A^{(1)}&:=& \{ ((a,j)^{(1)},(i,j)^{(2)}) \mid i,a,j \in U \}, \\
	A^{(2)}&:=& \{ ((i,j)^{(2)},(i,b)^{(3)}) \mid i,b,j \in U \},
\end{eqnarray}
\noindent and, hence, the arcs of $T$ are $A:=A^{(1)} \cup A^{(2)}$.
Note that in this case the graph $T$ has $3N^2$ nodes and $2N^3$ arcs.
Whenever $(F_1,F_2)$ is a feasible flow between $\mu$ and $\nu$, we can think of the values $f^{(1)}_{a,i,j}$ as the quantity of 
mass that travels from $(a,j)$ to $(i,j)$ or, equivalently, that moves along the arc $((a,j),(i,j))$ of the tripartite graph, 
while the values $f^{(2)}_{i,j,b}$ are the mass moving along the arc $((i,j),(i,b))$
(e.g., see Figures \ref{fig1}--\subref{fig1c} and \ref{fig2}).
	
Now we can give an idea of the roles of the sets $V^{(1)}$, $V^{(2)}$ and $V^{(3)}$: $V^{(1)}$ is the node set where is drawn the initial distribution $\mu$, 
while on $V^{(3)}$ it is drawn the final configuration of the mass $\nu$. The node set $V^{(2)}$ is an auxiliary grid that hosts an intermediate 
configuration between $\mu$ and $\nu$. 

We are now ready to state our main contribution.	
\begin{theorem}
	\label{teoremaequivalenza}
	For each measure $\pi$ on $G\times G$ that transports $\mu$ into $\nu$, we can find a feasible flow $(F_1,F_2)$ such that
	\begin{equation}
	\label{result1}
	{R}(F_1,F_2)=\sum_{((a,j),(i,b))} ((a-i)^2+(b-j)^2)\pi_{(a,j),(i,b))}.
	\end{equation}
\end{theorem}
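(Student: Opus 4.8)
The plan is to construct the feasible flow explicitly from $\pi$ by splitting every transport path from a source $(a,j)$ to a destination $(i,b)$ at the intermediate node $(i,j)$, exactly as illustrated in Figure \ref{fig2}. Concretely, I would set
\begin{equation*}
f^{(1)}_{a,i,j} := \sum_{b=1}^N \pi_{(a,j),(i,b)}, \qquad f^{(2)}_{i,j,b} := \sum_{a=1}^N \pi_{(a,j),(i,b)},
\end{equation*}
so that $f^{(1)}_{a,i,j}$ is the total mass leaving $(a,j)$ whose destination has first coordinate $i$, and $f^{(2)}_{i,j,b}$ is the total mass arriving at $(i,b)$ that originates from a point with second coordinate $j$. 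The first leg then displaces only the first coordinate (from $a$ to $i$ at fixed $j$) and the second leg only the second coordinate (from $j$ to $b$ at fixed $i$), which is precisely why the separable functional ${R}$ will recover the full squared Euclidean cost.

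Next I would check that $(F_1,F_2)$ lies in $\CF(\mu,\nu)$, i.e. that it satisfies \eqref{condizionecompmu}--\eqref{incollamento}. Summing $f^{(1)}_{a,i,j}$ over $i$ gives $\sum_{i,b}\pi_{(a,j),(i,b)}$, which is exactly the first marginal of $\pi$ at $(a,j)$ and hence equals $\mu_{a,j}$, establishing \eqref{condizionecompmu}. Symmetrically, summing $f^{(2)}_{i,j,b}$ over $j$ yields the second marginal of $\pi$ at $(i,b)$, equal to $\nu_{i,b}$, which is \eqref{condizionecompnu}. Finally, for the gluing condition \eqref{incollamento}, both $\sum_a f^{(1)}_{a,i,j}$ and $\sum_b f^{(2)}_{i,j,b}$ collapse to the same double sum $\sum_{a,b}\pi_{(a,j),(i,b)}$, so they agree for every $(i,j)$; this is simply conservation of mass at the auxiliary node $(i,j)^{(2)}$.

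It then remains to verify the cost identity \eqref{result1}. Substituting the definitions into \eqref{formulationflow} and interchanging the (finite) order of summation, the first block contributes $\sum_{a,i,j,b}(a-i)^2\,\pi_{(a,j),(i,b)}$ and the second block contributes $\sum_{a,i,j,b}(j-b)^2\,\pi_{(a,j),(i,b)}$; adding them gives $\sum_{((a,j),(i,b))}\bigl((a-i)^2+(b-j)^2\bigr)\pi_{(a,j),(i,b)}$, which is exactly the claimed right-hand side. The only real care needed is index bookkeeping --- tracking which coordinate each summation variable refers to and confirming that the two marginalizations select the supply $\mu$ and the demand $\nu$ correctly --- but no inequality or optimality argument enters, since the construction merely relabels the same mass rather than performing a minimization. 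I therefore expect this direction (every $\pi$ induces a flow of equal cost) to be the routine half; the substantive effort lies in the matching reverse construction, which, combined with the present statement, would yield the full equivalence of the two optimal values.
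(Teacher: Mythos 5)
Your construction is exactly the one in the paper's proof: the same definitions $f^{(1)}_{a,i,j}=\sum_b\pi_{((a,j),(i,b))}$ and $f^{(2)}_{i,j,b}=\sum_a\pi_{((a,j),(i,b))}$, the same marginal computations verifying \eqref{condizionecompmu}--\eqref{incollamento}, and the same interchange of summations for the cost identity \eqref{result1}. Your closing remark is also accurate --- the paper likewise treats this direction as the routine half and defers the reverse construction, which relies on the gluing lemma, to the Additional Material.
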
	
\begin{proof}\textit{(Sketch).} 
	We will only show how to build a feasible flow starting from a transport plan, 
	the inverse building uses a more technical lemma (the so--called {\it gluing lemma} \cite{AGS,Villani2008}) and can be found in the Additional Material.				
	Let $\pi$ be a transport plan, if we write explicitly the ground distance $\ell_2((a,j),(i,b))$ we find that
	\begin{eqnarray*}
		\sum_{((a,j),(i,b))} \ell_2((a,j),(i,b))\pi_{((a,j),(i,b))}\hspace{-0.3cm}&=&\hspace{-0.3cm}\sum_{((a,j),(i,b))} ((a-i)^2+(j-b)^2)\pi_{((a,j),(i,b))}\\
		\hspace{-0.3cm}&=&\hspace{-0.3cm} \sum_{j,i} \left[\sum_{a,b}(a-i)^2\pi_{((a,j),(i,b))} + \sum_{a,b} (j-b)^2 \pi_{((a,j),(i,b))} \right].
	\end{eqnarray*}
	If we set $f^{(1)}_{a,i,j}=\sum_b\pi_{((a,j),(i,b))}$ and $f^{(2)}_{i,j,b}=\sum_a \pi_{((a,j),(i,b))}$ we find
	\begin{equation*}
		\sum_{((a,j),(i,b))} \ell_2((a,j),(i,b))\pi_{((a,j),(i,b))}=\sum_{i,j}^n \left[\sum_a^n (a-i)^2f^{(1)}_{a,i,j}+ \sum_b^n (j-b)^2 f^{(2)}_{i,j,b}\right].
	\end{equation*}
	In order to conclude we have to prove that those $f^{(1)}_{a,i,j}$ and $f^{(2)}_{i,j,b}$ satisfy the constraints \eqref{condizionecompmu}--\eqref{incollamento}.		
	
	By definition we have 
	\begin{equation*}
		\sum_i f^{(1)}_{a,i,j}=\sum_i \sum_b\pi_{((a,j),(i,b))}=\mu_{a,j},
	\end{equation*}
	thus proving (\ref{condizionecompmu}); similarly, it is possible to check constraint \eqref{condizionecompnu}.
	The constraint \eqref{incollamento} also follows easily since
	\begin{equation*}
		\sum_{a}f^{(1)}_{a,i,j}= \sum_a \sum_b\pi_{((a,j),(i,b))} = \sum_{b}f^{(2)}_{i,j,b}.
	\end{equation*}
\end{proof}

As a straightforward, yet fundamental, consequence we have the following result.
	
\begin{corollary} 
If we set $c((a,j),(i,b))=(a-i)^2+(j-b)^2$ then, for any discrete measures $\mu$ and $\nu$, we have that 
\begin{equation}
W^2_2(\mu,\nu)=\min_{\mathcal{F}(\mu,\nu)}R(F_1,F_2).
\end{equation}
\end{corollary}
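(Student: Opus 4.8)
The plan is to obtain the corollary as an immediate consequence of Theorem~\ref{teoremaequivalenza} together with its converse, by checking that the two minimization problems range over the \emph{same} set of attainable objective values. First I would unwind the definition of $W_2$: putting $p=2$ in \eqref{wpgeneral} and using $\min(1/2,1)=1/2$ gives
\begin{equation*}
W^2_2(\mu,\nu)=\inf_{\pi\in\Pi(\mu,\nu)}\sum_{((a,j),(i,b))}\big((a-i)^2+(j-b)^2\big)\pi_{((a,j),(i,b))},
\end{equation*}
which is precisely the Kantorovich functional $\CW_c(\mu,\nu)$ for the cost $c((a,j),(i,b))=(a-i)^2+(j-b)^2$ fixed in the statement. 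Hence the claim reduces to showing that this infimum over transport plans coincides with $\min_{\mathcal{F}(\mu,\nu)}R$.

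For the bound $\min_{\mathcal{F}(\mu,\nu)}R\le W^2_2(\mu,\nu)$ I would use Theorem~\ref{teoremaequivalenza} as stated: each transport plan $\pi$ induces a feasible flow $(F_1,F_2)\in\mathcal{F}(\mu,\nu)$ with $R(F_1,F_2)$ equal to the transport cost of $\pi$, so $\min_{\mathcal{F}(\mu,\nu)}R\le R(F_1,F_2)=\sum_{((a,j),(i,b))}\big((a-i)^2+(j-b)^2\big)\pi_{((a,j),(i,b))}$ for every $\pi$, and an infimum over $\pi\in\Pi(\mu,\nu)$ yields the inequality. For the reverse bound $W^2_2(\mu,\nu)\le\min_{\mathcal{F}(\mu,\nu)}R$ I would invoke the flow-to-plan direction of Theorem~\ref{teoremaequivalenza} (the one proved in the Additional Material via the gluing lemma): each feasible flow $(F_1,F_2)$ yields a plan $\pi\in\Pi(\mu,\nu)$ of the same cost, whence $W^2_2(\mu,\nu)\le R(F_1,F_2)$ for every feasible flow, and an infimum over $\mathcal{F}(\mu,\nu)$ completes the pair of inequalities. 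Together they force equality.

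Finally I would justify replacing $\inf$ by $\min$ on the flow side. Since $\Pi(\mu,\nu)$ is a nonempty compact transportation polytope and the cost is linear, the infimum defining $W^2_2$ is attained at some optimal plan $\pi^*$; the feasible flow associated to $\pi^*$ by Theorem~\ref{teoremaequivalenza} then realizes the common value, so the minimum over $\mathcal{F}(\mu,\nu)$ is attained. (Alternatively, one observes directly that $\mathcal{F}(\mu,\nu)$ is a nonempty bounded polyhedron---boundedness coming from nonnegativity of the flow variables together with the marginal constraints \eqref{condizionecompmu}--\eqref{condizionecompnu}---and applies the Weierstrass theorem to the continuous functional $R$.) The only substantive ingredient is the flow-to-plan construction, which is exactly why the gluing lemma is required; granted that, the corollary follows purely from the cost-preserving correspondence in both directions, and I anticipate no further obstacle.
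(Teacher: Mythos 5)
Your proposal is correct and follows essentially the same route as the paper: the corollary is obtained from the cost-preserving plan-to-flow correspondence of Theorem~\ref{teoremaequivalenza} together with its converse flow-to-plan direction (proved in the Additional Material via the gluing lemma), yielding the two inequalities whose combination gives equality. Your additional remarks on attainment of the minimum (compactness of the transportation polytope, or equivalently of $\mathcal{F}(\mu,\nu)$) are a sound and welcome explicit justification of a point the paper leaves implicit.
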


Indeed, we can compute the Kantorovich-Wasserstein distance of order 2 between a pair of discrete measures $\mu, \nu$, by solving an 
uncapacitated minimum cost flow problem on the given tripartite graph $T:=(V^{(1)} \cup V^{(2)} \cup V^{(3)}, A^{(1)} \cup A^{(2)})$.

We remark that our approach is very general and it can be directly extended to deal with the following generalizations.

\paragraph{More general cost functions.} The structure that we have exploited of the Euclidean distance $\ell_2$ is present in any
cost function $c: G \times G \rightarrow [0,\infty]$ that is separable, i.e., has the form
	\begin{equation*}
		c(x,y)= c^{(1)}(x_1,y_1) + c^{(2)}(x_2,y_2),
	\end{equation*}
	where both $c^{(1)}$ and $c^{(2)}$ are positive real valued functions defined over $G$. 
	We remark that the whole class of costs $c_p(x,y)=(x_1-y_1)^p+(x_2-y_2)^p$ is of that kind, 
	so we can compute any of the Kantorovich-Wasserstein distances related to each $c_p$.
	
\paragraph{Higher dimensional grids.} Our approach can handle discrete measures in spaces of any dimension $d$, that is, for instance, any $d$-dimensional histogram. 
In dimension $d=2$, we get a tripartite graph because we decomposed the transport along the two main directions.
If we have a problem in dimension $d$, we need a $(d+1)$-plet of grids connected by arcs oriented as the $d$ fundamental directions, yielding a $(d+1)$-partite graph.
As the dimension $d$ grows, our approach gets faster and more memory efficient than the standard formulation given on a bipartite graph.

In the Additional Material, we present a generalization of Theorem 1 to any dimension $d$ and to {\it separable} cost functions $c(x,y)$.


\section{Computational Results}
In this section, we report the results obtained on two different set of instances. 
The goal of our experiments is to show how our approach scales with the size of the histogram $N$ and with the dimension of the histogram $d$.
As cost distance $c(x,y)$, with $x,y \in \mathbb{R}^d$, we use the squared $\ell_2$ norm.
As problem instances, we use the gray scale images (i.e., 2-dimensional histograms) proposed by the DOTMark benchmark \cite{Dotmark}, 
and a set of $d$-dimensional histograms obtained by bio medical data measured by flow cytometer \cite{Bernas2008}.

\paragraph{Implementation details.}
We run our experiments using the Network Simplex as implemented in the Lemon C++ graph library\footnote{\url{http://lemon.cs.elte.hu} (last visited on October, 26th, 2018)},
since it provides the fastest implementation of the Network Simplex algorithm to solve uncapacitated minimum cost flow problems \cite{Kovacs2015}.
We did try other state-of-the-art implementations of combinatorial algorithm for solving min cost flow problems, but the Network Simplex of
the Lemon graph library was the fastest by a large margin.
The tests are executed on a gaming laptop with Windows 10 (64 bit), equipped with an Intel i7-6700HQ CPU and 16 GB of Ram. 
The code was compiled with MS Visual Studio 2017, using the ANSI standard C++17. The code execution is single threaded.
The Matlab implementation of the Sinkhorn's algorithm \cite{Cuturi2013} runs in parallel on the CPU cores, but we do not use any GPU in our test.
The C++ and Matlab code we used for this paper is freely available at \url{http://stegua.github.io/dpartion-nips2018}.

\paragraph{Results for the DOTmark benchmark.} 
The DOTmark benchmark contains 10 classes of gray scale images related to randomly generated images, classical images, 
and real data from microscopy images of mitochondria \cite{Dotmark}. In each class there are 10 different images.
Every image is given in the data set at the following pixel resolutions: $32\times32$, $64\times64$, $128\times128$, $256\times256$, and $512\times512$.
The images in Figure \ref{fig:dot} are respectively the {\it ClassicImages}, {\it Microscopy}, and {\it Shapes} images (one class for each row), shown at highest resolution.

\begin{figure}[!t]
\centering
{\renewcommand{\arraystretch}{0.7}
\setlength{\tabcolsep}{0.1em}
\begin{tabular}{cccccccccc}
  \includegraphics[width=0.095\linewidth]{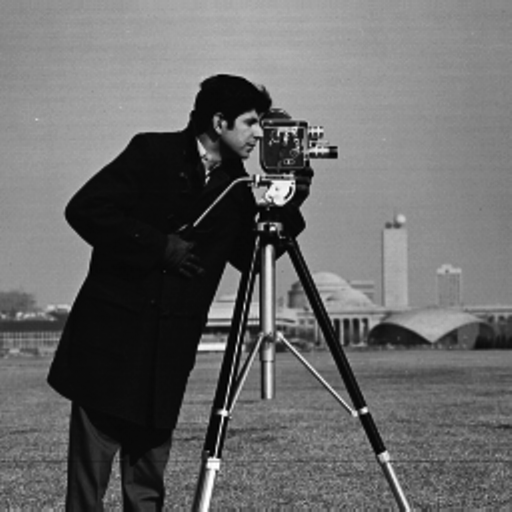}  & \includegraphics[width=0.095\linewidth]{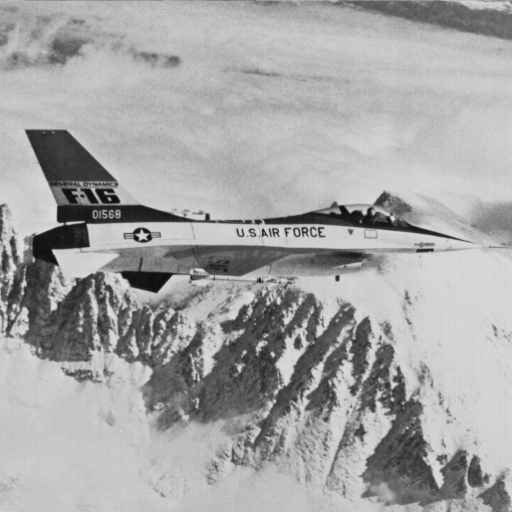} &
  \includegraphics[width=0.095\linewidth]{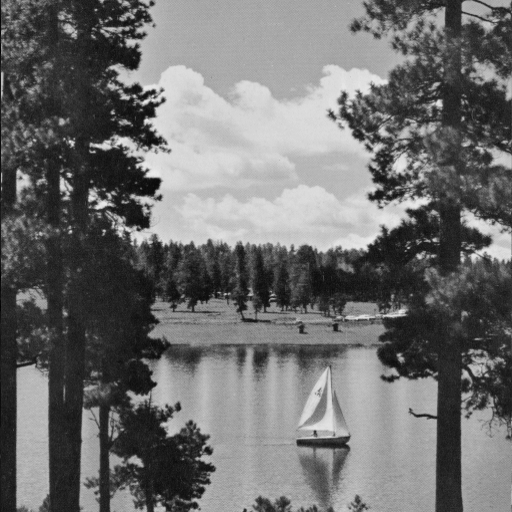} & \includegraphics[width=0.095\linewidth]{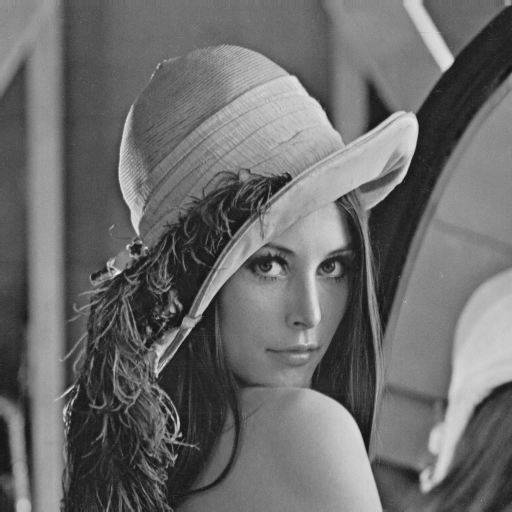} &
  \includegraphics[width=0.095\linewidth]{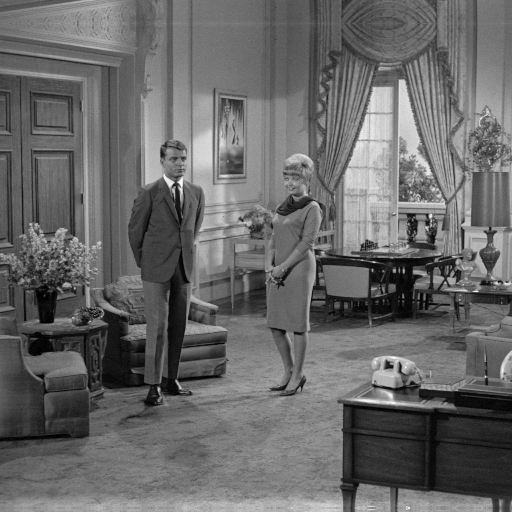} &
  \includegraphics[width=0.095\linewidth]{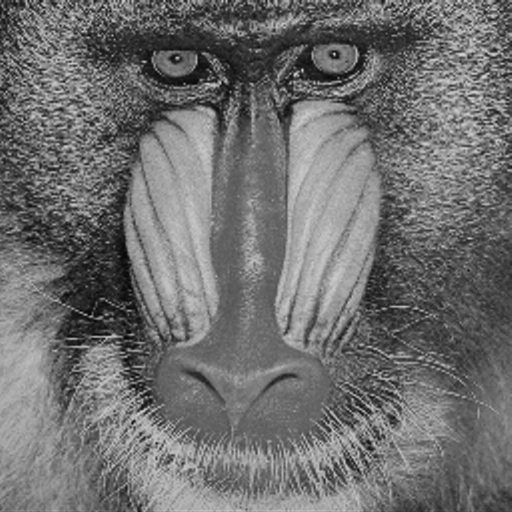}  & \includegraphics[width=0.095\linewidth]{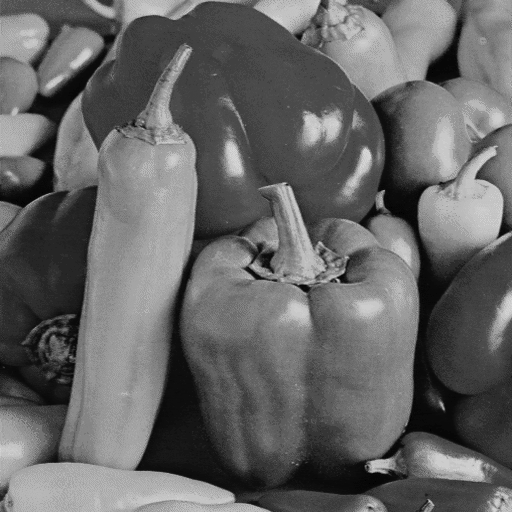} &
  \includegraphics[width=0.095\linewidth]{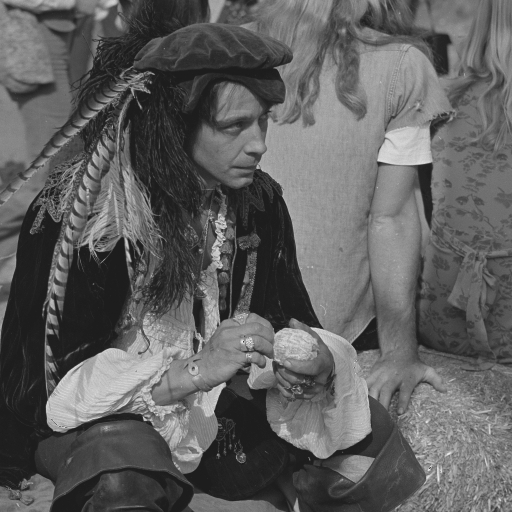} & \includegraphics[width=0.095\linewidth]{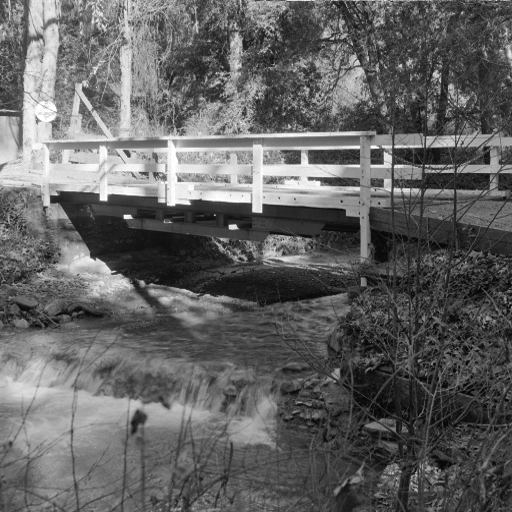} &
  \includegraphics[width=0.095\linewidth]{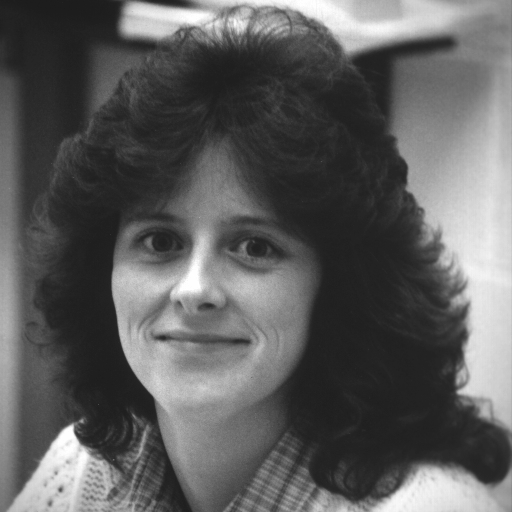} \\  
  \includegraphics[width=0.095\linewidth]{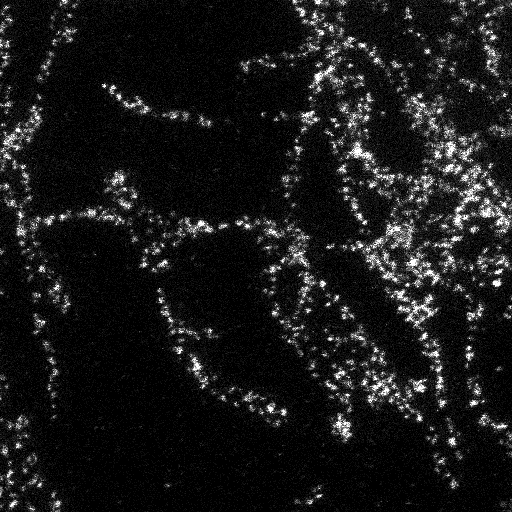}  & \includegraphics[width=0.095\linewidth]{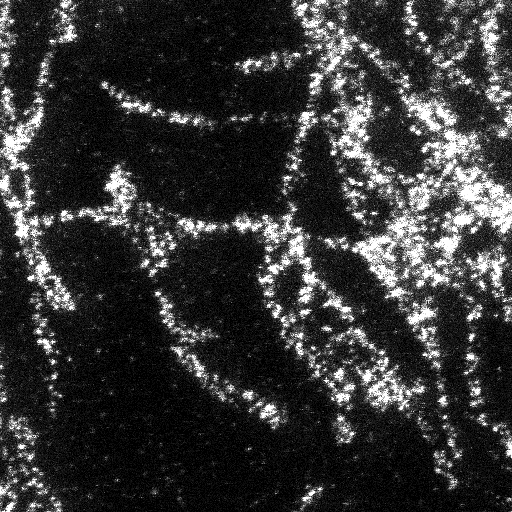} &
  \includegraphics[width=0.095\linewidth]{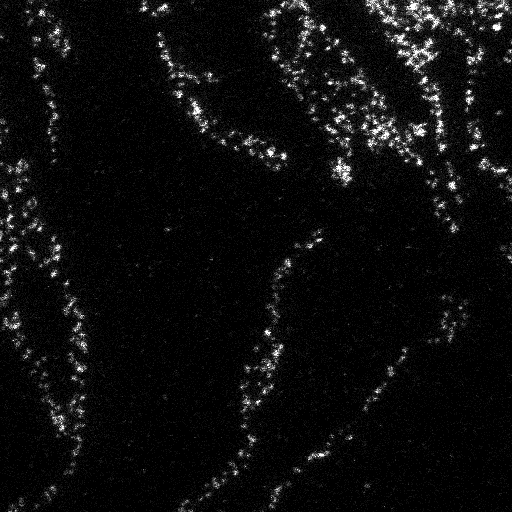} & \includegraphics[width=0.095\linewidth]{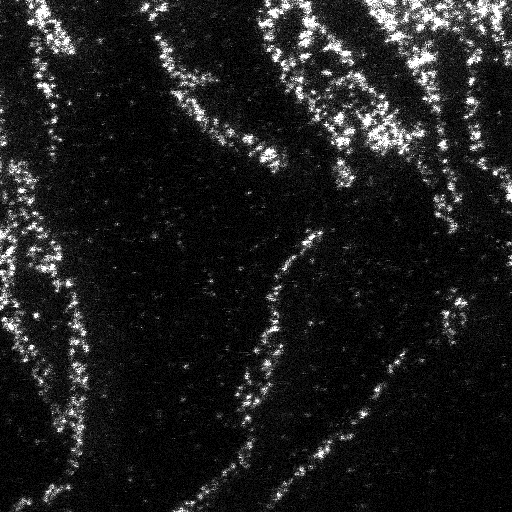} &
  \includegraphics[width=0.095\linewidth]{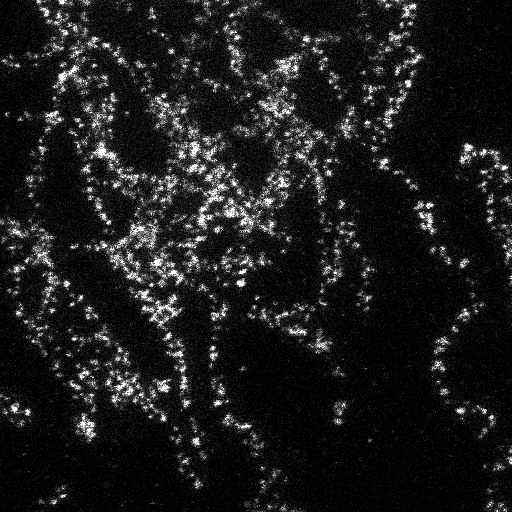} &
  \includegraphics[width=0.095\linewidth]{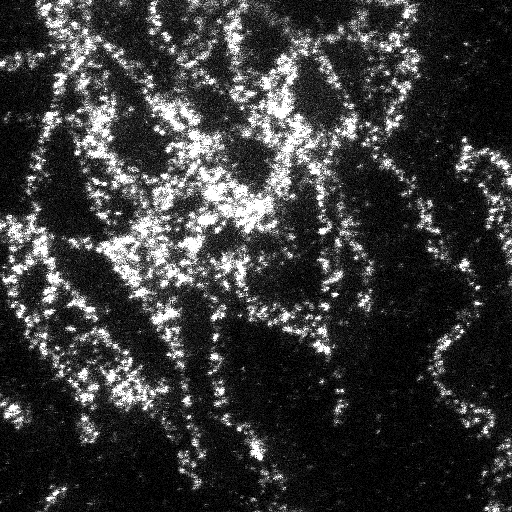}  & \includegraphics[width=0.095\linewidth]{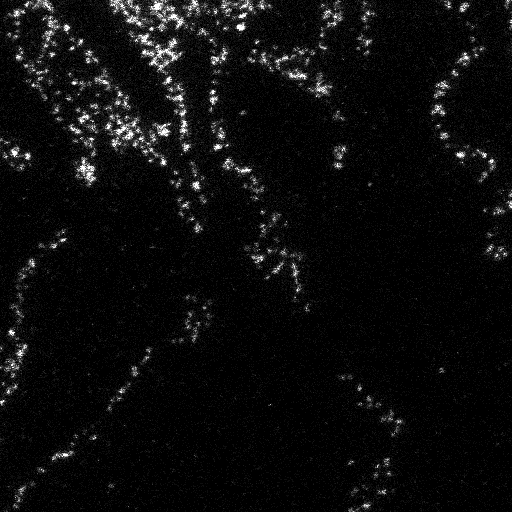} &
  \includegraphics[width=0.095\linewidth]{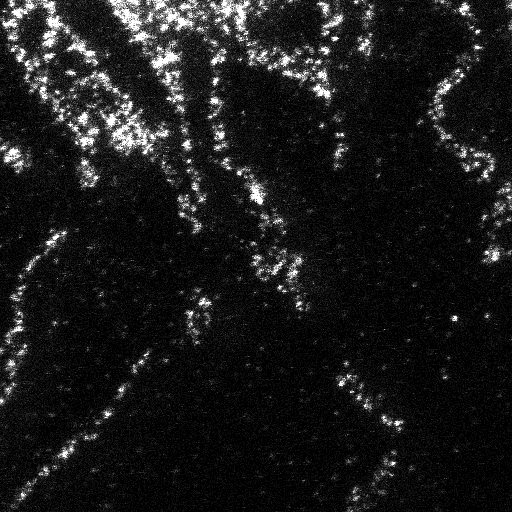} & \includegraphics[width=0.095\linewidth]{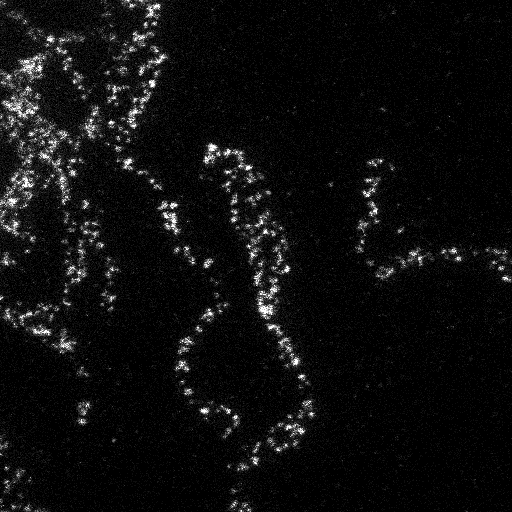} &
  \includegraphics[width=0.095\linewidth]{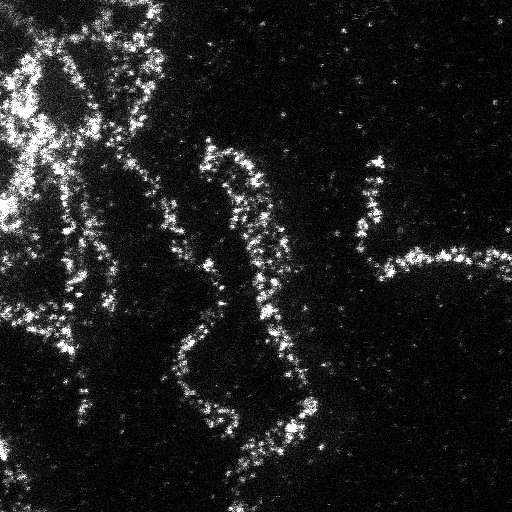} \\
    \includegraphics[width=0.095\linewidth]{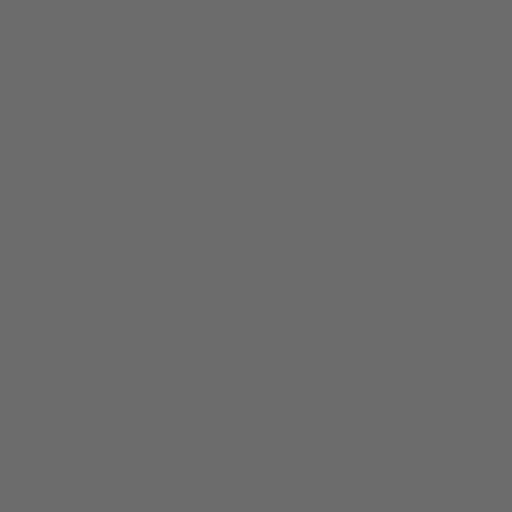}  & \includegraphics[width=0.095\linewidth]{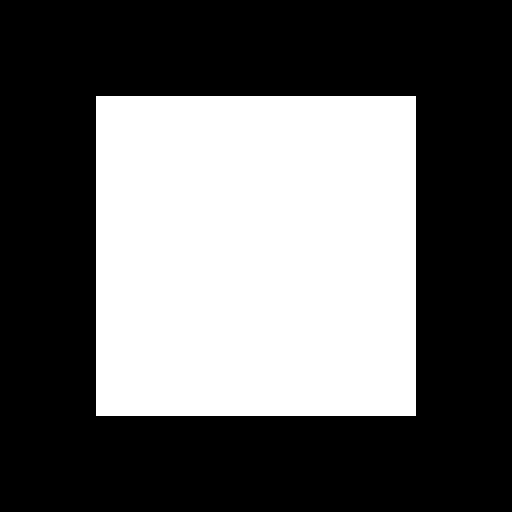} &
  \includegraphics[width=0.095\linewidth]{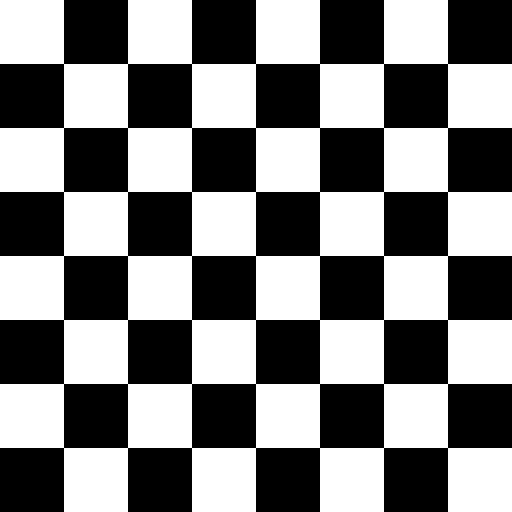} & \includegraphics[width=0.095\linewidth]{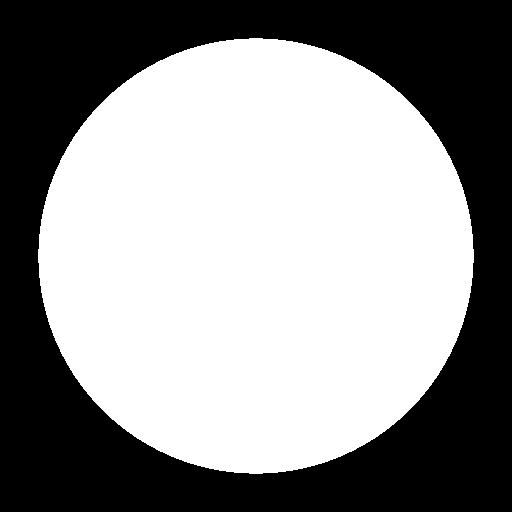} &
  \includegraphics[width=0.095\linewidth]{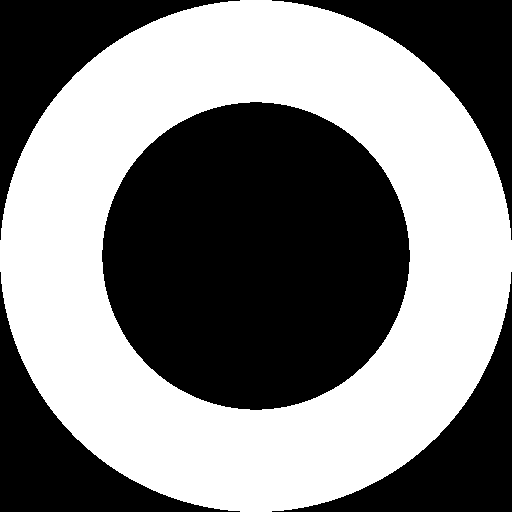} &
  \includegraphics[width=0.095\linewidth]{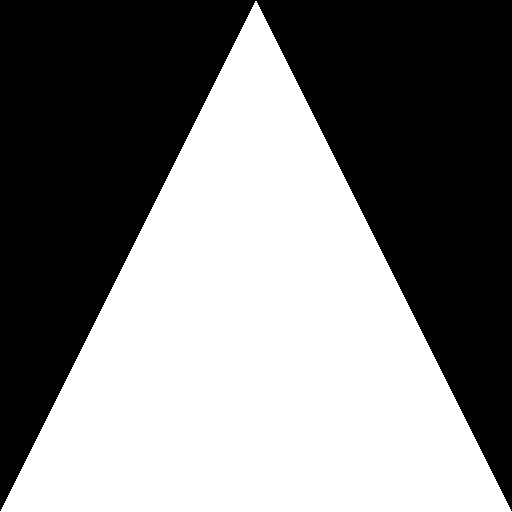}  & \includegraphics[width=0.095\linewidth]{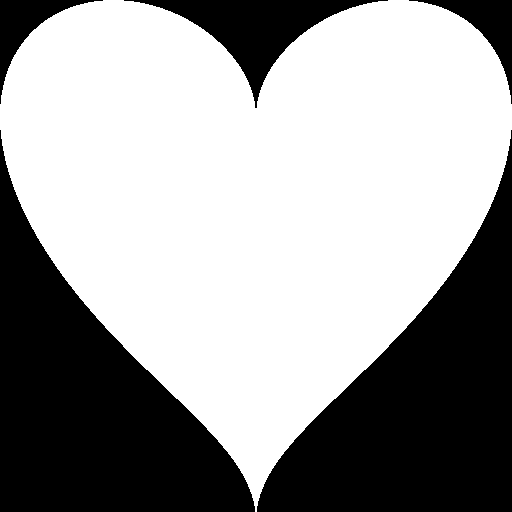} &
  \includegraphics[width=0.095\linewidth]{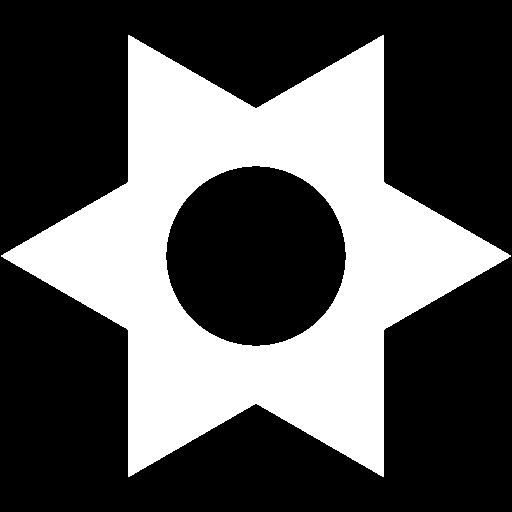} & \includegraphics[width=0.095\linewidth]{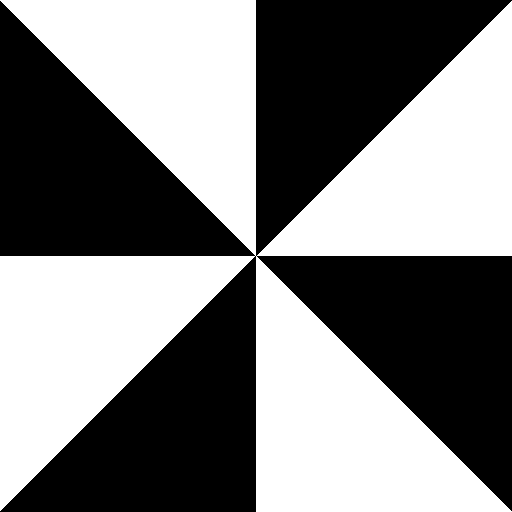} &
  \includegraphics[width=0.095\linewidth]{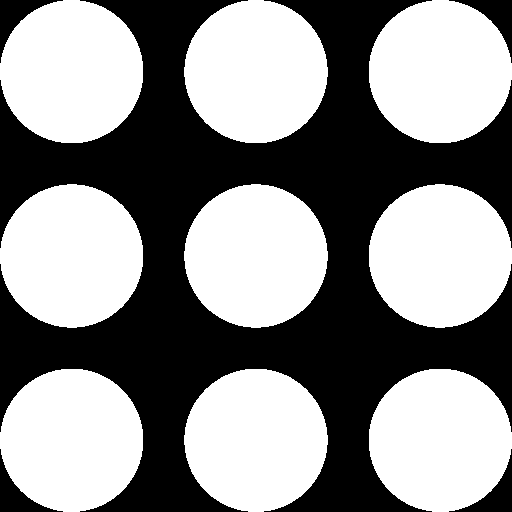} \\
 \end{tabular}}
\caption{DOTmark benchmark: Classic, Microscopy, and Shapes images. \label{fig:dot}}
\end{figure}

In our test, we first compared five approaches to compute the Kantorovich-Wasserstein distances on images of size $32\times32$:
\begin{enumerate}
\item {\bf EMD}: The implementation of Transportation Simplex provided by \cite{Rubner98}, known in the literature as EMD code, 
that is an exact general method to solve optimal transport problem. We used the implementation in the programming language C, as provided by the authors,
and compiled with all the compiler optimization flags active.

\item {\bf Sinkhorn}: The Matlab implementation of the Sinkhorn's algorithm\footnote{\url{http://marcocuturi.net/SI.html} (last visited on October, 26th, 2018)} 
\cite{Cuturi2013}, that is an approximate
approach whose performance in terms of speed and numerical accuracy depends on a parameter $\lambda$: for smaller values of $\lambda$, the algorithm
is faster, but the solution value has a large gap with respect to the optimal value of the transportation problem; 
for larger values of $\lambda$, the algorithm is more accurate (i.e., smaller gap), but it becomes slower.
Unfortunately, for very large value of $\lambda$ the method becomes numerically unstable.
The best value of $\lambda$ is very problem dependent. In our tests, we used $\lambda=1$ and $\lambda = 1.5$. The second value, $\lambda=1.5$, 
is the largest value we found for which the algorithm computes the distances for all the instances considered without facing numerical issues.

\item {\bf Improved Sinkhorn}: We implemented in Matlab an improved version of the Sinkhorn's algorithm, 
specialized to compute distances over regular 2-dimensional grids \cite{Solomon2015,Solomon2018}.
The main idea is to improve the matrix-vector operations that are the true computational bottleneck of Sinkhorn's algorithm, by exploiting the structure of the cost matrix.
Indeed, there is a parallelism with our approach to the method presented in \cite{Solomon2015}, since
both exploits the geometric cost structure. In \cite{Solomon2015}, the authors proposes a general method that exploits a heat kernel to speed up
the matrix-vector products.
When the discrete measures are defined over a regular 2-dimensional grid, the cost matrix used by the Sinkhorn's algorithm can be obtained using a Kronecker
product of two smaller matrices. Hence, instead of performing a matrix-vector product using a matrix of dimension $N \times N$, we perform
two matrix-matrix products over matrices of dimension $\sqrt{N}\times \sqrt{N}$, yielding a significant runtime improvement.
In addition, since the smaller matrices are Toeplitz matrices, they can be embedded into circulant matrices, and, as consequence, it is possible
to employ a Fast Fourier Transform approach to further speed up the computation. Unfortunately, the Fast Fourier Transform makes the approach
still more numerical unstable, and we did not used it in our final implementation.

\item {\bf Bipartite}: The bipartite formulation presented in Figure \ref{fig1}--\subref{fig1b}, which is the same as \cite{Rubner98}, but it is solved
with the Network Simplex implemented in the Lemon Graph library \cite{Kovacs2015}.

\item {\bf $3$-partite}: The $3$-partite formulation proposed in this paper, which for 2-dimensional histograms is represented in \ref{fig1}--\subref{fig1c}.
Again, we use the Network Simplex of the Lemon Graph Library to solve the corresponding uncapacitated minimum cost flow problem.
\end{enumerate}

Tables \ref{tab:1}(a) and \ref{tab:1}(b) report the averages of our computational results over different classes of images of the DOTMark benchmark. 
Each class of gray scale image contains 10 instances, and we compute the distance between every possible pair of images within the same class:
the first image plays the role of the source distribution $\mu$, and the second image gives the target distribution $\nu$. 
Considering all pairs within a class, it gives 45 instances for each class.
We report the means and the standard deviations (between brackets) of the runtime, measured in seconds.
Table \ref{tab:1}(a) shows in the second column the runtime for EMD \cite{Rubner98}. The third and fourth columns gives the runtime and the optimality gap
for the Sinkhorn's algorithm with $\lambda=1$; the 6-$th$ and 7-$th$ columns for $\lambda=1.5$.
The percentage gap is computed as $\mbox{Gap}=\frac{UB-opt}{opt}\cdot 100$, where $UB$ is the upper bound computed by the Sinkhorn's algorithm, 
and $opt$ is the optimal value computed by EMD. The last two columns report the runtime for the bipartite and $3$-partite approaches presented in this paper.

Table \ref{tab:1}(b) compares our $3$-partite formulation with the Improved Sinkhorn's algorithm \cite{Solomon2015,Solomon2018}, reporting the same statistics of the previous table.
In this case, we run the Improved Sinkhorn using three values of the parameter $\lambda$, that are, 1.0, 1.25, and 1.5. While the Improved Sinkhorn is 
indeed much faster that the general algorithm as presented in \cite{Cuturi2013}, it does suffer of the same numerical stability issues, and,
it can yield very poor percentage gap to the optimal solution, as it happens for the GRFrough and the WhiteNoise classes, where the optimality gaps
are on average 31.0\% and 39.2\%, respectively.

As shown in Tables \ref{tab:1}(a) and \ref{tab:1}(b), the $3$-partite approach is clearly faster than any of the alternatives considered here, despite being an exact method.
In addition, we remark that, even on the bipartite formulation, the Network Simplex implementation of the Lemon Graph library is order of magnitude faster than EMD,
and hence it should be the best choice in this particular type of instances. We remark that it might be unfair to compare an algorithm implemented in C++ with
an algorithm implemented in Matlab, but still, the true comparison is on the solution quality more than on the runtime. 
Moreover, when implemented on modern GPU that can fully exploit parallel matrix-vector operations, the Sinkhorn's algorithm can run much faster,
but they cannot improve the optimality gap.

\begin{table}[t!]
\caption{Comparison of different approaches on $32 \times 32$ images. The runtime (in seconds) is given as ``Mean (StdDev)''.
The gap to the optimal value {\it opt} is computed as $\frac{UB-opt}{opt}\cdot 100$, where $UB$ is the upper bound computed by Sinkhorn's algorithm.
Each row reports the averages over 45 instances.
\label{tab:1}}
\centering
{\renewcommand{\arraystretch}{1.2}
\begin{tabular}{lccrcrcc}
      & \multicolumn{1}{c}{EMD \cite{Rubner98}} &  \multicolumn{4}{c}{Sinkhorn \cite{Cuturi2013}} & \multicolumn{1}{c}{Bipartite} & \multicolumn{1}{c}{$3$-partite} \\
	  & & \multicolumn{2}{c}{$\lambda=1$} & \multicolumn{2}{c}{$\lambda=1.5$} &  &  \\
Image Class & Runtime & Runtime & Gap & Runtime & Gap & Runtime & Runtime \\
\hline
Classic    & 24.0 (3.3) & 6.0 (0.5) & 17.3\% & 8.9 (0.7) & 9.1\%  & 0.54 (0.05) & 0.07 (0.01)\\
Microscopy & 35.0 (3.3) & 3.5 (1.0) &  2.4\% & 5.3 (1.4) & 1.2\% & 0.55 (0.03) & 0.08 (0.01)\\
Shapes     & 25.2 (5.3) & 1.6 (1.1) &  5.6\% & 2.5 (1.6) & 3.0\% & 0.50 (0.07) & 0.05 (0.01)\\
 \hline\noalign{\smallskip}
 \multicolumn{8}{c}{(a)} \\
 \multicolumn{8}{c}{} \\
 \end{tabular}}
\centering
{\renewcommand{\arraystretch}{1.2}
\setlength{\tabcolsep}{5pt}
\begin{tabular}{lcrcrcrc}
      & \multicolumn{6}{c}{Improved Sinkhorn \cite{Solomon2015,Solomon2018}} & \multicolumn{1}{c}{$3$-partite} \\
	  & \multicolumn{2}{c}{$\lambda=1$} & \multicolumn{2}{c}{$\lambda=1.25$}  & \multicolumn{2}{c}{$\lambda=1.5$}  &  \\
Image Class     & Runtime           & Gap       & Runtime           & Gap       & Runtime           & Gap       &   Runtime \\
\hline
 CauchyDensity	&	0.22	(0.15)	&	2.8\%	&	0.33	(0.23)	&	2.0\%	&	0.41	(0.28)	&	1.5\%	&	0.07	(0.01)	\\
 Classic		&	0.20	(0.01)	&	17.3\%	&	0.31	(0.02)	&	12.4\%	&	0.39	(0.03)	&	9.1\%	&	0.07	(0.01)	\\
 GRFmoderate	&	0.19	(0.01)	&	12.6\%	&	0.29	(0.02)	&	9.0\%	&	0.37	(0.03)	&	6.6\%	&	0.07	(0.01)	\\
 GRFrough		&	0.19	(0.01)	&	58.7\%	&	0.29	(0.01)	&	42.1\%	&	0.38	(0.02)	&	31.0\%	&	0.05	(0.01)	\\
 GRFsmooth		&	0.20	(0.02)	&	4.3\%	&	0.30	(0.04)	&	3.1\%	&	0.38	(0.04)	&	2.2\%	&	0.08	(0.01)	\\
 LogGRF			&	0.22	(0.05)	&	1.3\%	&	0.32	(0.08)	&	0.9\%	&	0.40	(0.13)	&	0.7\%	&	0.08	(0.01)	\\
 LogitGRF		&	0.22	(0.02)	&	4.7\%	&	0.33	(0.03)	&	3.3\%	&	0.42	(0.04)	&	2.5\%	&	0.07	(0.02)	\\
 Microscopy 	&	0.18	(0.03)	&	2.4\%	&	0.27	(0.04)	&	1.7\%	&	0.34	(0.05)	&	1.2\%	&	0.08	(0.02)	\\
 Shapes			&	0.11	(0.04)	&	5.6\%	&	0.16	(0.06)	&	4.0\%	&	0.20	(0.07)	&	3.0\%	&	0.05	(0.01)	\\
 WhiteNoise		&	0.18	(0.01)	&	76.3\%	&	0.28	(0.01)	&	53.8\%	&	0.37	(0.02)	&	39.2\%	&	0.04	(0.00)	\\
\hline\noalign{\smallskip}
 \multicolumn{8}{c}{(b)} \\
 \end{tabular}}
\end{table}

In order to evaluate how our approach scale with the size of the images, we run additional tests using images of size $64\times64$ and $128\times128$.
Table \ref{tab:2} reports the results for the bipartite and $3$-partite approaches for increasing size of the 2-dimensional histograms.
The table report for each of the two approaches, the number of vertices $|V|$ and of arcs $|A|$, and the means and standard deviations of the runtime.
As before, each row gives the averages over 45 instances. Table \ref{tab:2} shows that the $3$-partite approach is clearly better (i) in terms of memory,
since the 3-partite graph has a fraction of the number of arcs, and (ii) of runtime, since it is at least an order of magnitude faster in computation time.
Indeed, the 3-partite formulation is better essentially because it exploits the structure of the ground distance $c(x,y)$ used, that is, the squared $\ell_2$ norm.
\begin{table}[t!]
\caption{Comparison of the bipartite and the $3$-partite approaches on 2-dimensional histograms.\label{tab:2}}
\centering
{\renewcommand{\arraystretch}{1.2}
\setlength{\tabcolsep}{0.5em}
\begin{tabular}{clrrrrrr}
      &      & \multicolumn{3}{c}{Bipartite} & \multicolumn{3}{c}{$3$-partite} \\
Size & Image Class & $|V|$ & $|A|$ & Runtime & $|V|$ & $|A|$ & Runtime  \\
\hline
$64 \times 64$ &Classic    &  8\,193& 16\,777\,216 & 16.3 (3.6) & 12\,288 & 524\,288 & 2.2 (0.2) \\
&Microscopy                 & & & 11.7 (1.4) & & & 1.0 (0.2) \\
&Shape                      & & & 13.0 (3.9) & & & 1.1 (0.3) \\
\hline\noalign{\smallskip}
$128\times128$ & Classic    &  32\,768 & 268\,435\,456& 1\,368 (545) & 49\,152 & 4\,194\,304& 36.2 (5.4) \\
&Microscopy                 & & & 959 (181) & & & 23.0 (4.8) \\
&Shape                      & & & 983 (230) & & & 17.8 (5.2) \\
 \hline\noalign{\smallskip}
 \end{tabular}}
\end{table}

\paragraph{Flow Cytometry biomedical data.}
Flow cytometry is a laser-based biophysical technology used to study human health disorders. Flow cytometry experiments produce huge set of data, which are very hard to analyze with standard statistics methods and algorithms \cite{Bernas2008}. Currently, such data is used to study the correlations of only two factors (e.g., biomarkers) at the time, by visualizing 2-dimensional histograms and by measuring the (dis-)similarity between pairs of histograms \cite{Orlova2016}. However, during a flow cytometry experiment up to hundreds of factors (biomarkers) are measured and stored in digital format. 
Hence, we can use such data to build $d$-dimensional histograms that consider up to $d$ biomarkers at the time, and then comparing the similarity among different
individuals by measuring the distance between the corresponding histograms.
In this work, we used the flow cytometry data related to {\it Acute Myeloid Leukemia (AML)}, available at \url{http://flowrepository.org/id/FR-FCM-ZZYA}, 
which contains cytometry data for 359 patients, classified as ``normal'' or affected by AML. 
This dataset has been used by the bioinformatics community to run clustering algorithms,
which should predict whether a new patient is affected by AML \cite{Aghaeepour2013}.

Table \ref{tab:3} reports the results of computing the distance between pairs of $d$-dimensional histograms, with $d$ ranging in the set $\{2,3,4\}$,
obtained using the AML biomedical data. Again, the first $d$-dimensional histogram plays the role of the source distribution $\mu$, while the second
histogram gives the target distribution $\nu$.
For simplicity, we considered regular histograms of size $n=N^d$ (i.e., $n$ is the total number of bins), using $N=16$ and $N=32$. 
Table \ref{tab:3} compares the results obtained by the bipartite
and $(d+1)$-partite approach, in terms of graph size and runtime. Again, the $(d+1)$-partite approach, by exploiting the structure of the ground distance,
outperforms the standard formulation of the optimal transport problem. We remark that for $N=32$ and $d=3$, we pass for going out-of-memory
with the bipartite formulation, to compute the distance in around 5 seconds with the $4$-partite formulation.

\begin{table}
\caption{Comparison between the bipartite and the $(d+1)$-partite approaches on Flow Cytometry data.}
	\label{tab:3}
\centering
{\renewcommand{\arraystretch}{1.2}
\begin{tabular}{llrrrrrrr}
     & & & \multicolumn{3}{c}{Bipartite Graph} & \multicolumn{3}{c}{$(d+1)$-partite Graph} \\
N & $d$ & $n$ & $|V|$ & $|A|$ & Runtime & $|V|$ & $|A|$ & Runtime  \\
\hline
$16$ & 2 & 256 & 512 & 65\,536 & 0.024 (0.01) & 768 & 8\,192 & 0.003 (0.00) \\
& 3 & 4\,096 & 8\,192 & 16\,777\,216& 38.2 (14.0)   & 16\,384 & 196\,608& 0.12 (0.02)  \\
& 4 & 65\,536 & \multicolumn{3}{c}{{\it out-of-memory}} & 327\,680 & 4\,194\,304& 4.8 (0.84) \\
 \hline\noalign{\smallskip}
$32$ & 2 & 1\,024 & 2\,048 & 1\,048\,756 & 0.71 (0.14) & 3072 & 65\,536& 0.04 (0.01) \\
& 3 & 32\,768& \multicolumn{3}{c}{{\it out-of-memory}}   & 131\,072 & 3\,145\,728& 5.23 (0.69)  \\
 \hline\noalign{\smallskip}
 \end{tabular}}
\end{table}


\section{Conclusions}
In this paper, we have presented a new network flow formulation on $(d+1)$-partite graphs that can speed up the optimal solution of transportation problems
whenever the ground cost function $c(x,y)$ (see objective function \eqref{p1:funobj}) has a separable structure along the main $d$ directions, such as, for instance, 
the squared $\ell_2$ norm used in the computation of the Kantorovich-Wasserstein distance of order 2.

Our computational results on two different datasets show how our approach scales with the size of the histograms $N$ and with the dimension of the histograms $d$.
Indeed, by exploiting the cost structure, the proposed approach is better in term of memory consumption, since it has only $dn^{\frac{d+1}{d}}$ arcs instead of $n^2$.
In addition, it is much faster since it has to solve an uncapacitated minimum cost flow problem on a much smaller flow network.

\subsubsection*{Acknowledgments}
We are deeply indebted to Giuseppe Savar\'e, for introducing us to optimal transport and for many stimulating discussions and suggestions. 
We thanks Mattia Tani for a useful discussion concerning the Improved Sinkhorn's algorithm.

This research was partially supported by the Italian Ministry of Education, University and Research (MIUR): 
Dipartimenti di Eccellenza Program (2018--2022) - Dept. of Mathematics ``F. Casorati'', University of Pavia.

The last author's research is partially supported by ``PRIN 2015. 2015SNS29B-002. Modern Bayesian nonparametric methods''.

\section*{Additional Material}
\small

\normalsize
\section*{Additional Material}
\begin{lemma}
	\textbf{(Gluing Lemma, \cite{AGS, Villani2008})}\label{gluinglemma} Let $\pi^{(1)}$ and $\pi^{(2)}$ 
	be two discrete probability measures in $\mathbb{R}^d\times \mathbb{R}^d$ such that
	\[
	\sum_{(b_1,...,b_d)}\pi^{(1)}(a_1,...,a_d;b_1,...,b_d)=\sum_{(b_1,...,b_d)}\pi^{(2)}(b_1,...,b_d;c_1,...,c_d)
	\]
	Then there exists a discrete probability measure $\pi$ on $\mathbb{R}^d\times \mathbb{R}^d\times \mathbb{R}^d$ such that
	\[
	\sum_{(c_1,...,c_d)}\pi(a_1,...,a_d;b_1,...,b_d;c_1,...,c_d)=\pi^{(1)}(a_1,...,a_d;b_1,...,b_d)
	\]
	and
	\[
	\sum_{(a_1,...,a_d)}\pi(a_1,...,a_d;b_1,...,b_d;c_1,...,c_d)=\pi^{(2)}(b_1,...,b_d;c_1,...,c_d).
	\]
\end{lemma}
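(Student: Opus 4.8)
The plan is to construct the required $\pi$ by an explicit discrete disintegration, i.e.\ by coupling the first and third blocks of coordinates so that they are conditionally independent given the middle block. First I would introduce the compact notation $a=(a_1,\dots,a_d)$, $b=(b_1,\dots,b_d)$, $c=(c_1,\dots,c_d)$, and set
\[
m(b):=\sum_{a}\pi^{(1)}(a;b)=\sum_{c}\pi^{(2)}(b;c),
\]
where the two expressions agree precisely because of the compatibility hypothesis: $\pi^{(1)}$ and $\pi^{(2)}$ share a common marginal on the middle variable $b$. This common law $m$ is what both conditionings will be built against.

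Next I would propose the candidate measure
\[
\pi(a;b;c):=\frac{\pi^{(1)}(a;b)\,\pi^{(2)}(b;c)}{m(b)}\quad\text{whenever } m(b)>0,
\]
and $\pi(a;b;c):=0$ on the fibres where $m(b)=0$. The reading is that, conditioned on $b$, one samples $a$ from the conditional law $\pi^{(1)}(\cdot\,;b)/m(b)$ and $c$ from $\pi^{(2)}(b;\cdot)/m(b)$ independently; multiplying the two conditionals by the common $m(b)$ reproduces a genuine joint law.

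The verification then splits into three elementary checks, each obtained by summing out one block and factoring the numerator. For the total mass, summing over $a$ and $c$ first gives $\sum_{a,c}\pi(a;b;c)=\big(\sum_a\pi^{(1)}(a;b)\big)\big(\sum_c\pi^{(2)}(b;c)\big)/m(b)=m(b)$, whence $\sum_{a,b,c}\pi(a;b;c)=\sum_b m(b)=1$, so $\pi$ is a probability measure. Marginalizing over $c$ yields $\sum_c\pi(a;b;c)=\pi^{(1)}(a;b)\big(\sum_c\pi^{(2)}(b;c)\big)/m(b)=\pi^{(1)}(a;b)$, which is exactly the first required identity; a symmetric computation summing over $a$ gives $\sum_a\pi(a;b;c)=\pi^{(2)}(b;c)$.

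The only point demanding care — rather than a genuine obstacle — is the degenerate set $\{b:m(b)=0\}$, where the defining quotient is formally $0/0$. I would dispose of it by noting that $0\le\pi^{(1)}(a;b)\le m(b)$ and $0\le\pi^{(2)}(b;c)\le m(b)$, so both target marginals already vanish identically on those fibres; hence setting $\pi=0$ there makes every identity hold trivially. It is worth remarking that in this discrete setting no measure-theoretic disintegration theorem is needed: the closed-form product formula does all the work, collapsing what would be the hard step in the general case, where one must invoke regular conditional probabilities (cf.\ \cite{AGS,Villani2008}), into a finite algebraic manipulation.
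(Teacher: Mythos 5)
Your proof is correct, and there is in fact no internal proof to compare it against: the paper states this lemma in the Additional Material and attributes it to \cite{AGS,Villani2008} without proving it, using it as a black box in the proof of the generalized theorem. Your construction---gluing by conditional independence given the middle block, i.e.\ $\pi(a;b;c)=\pi^{(1)}(a;b)\,\pi^{(2)}(b;c)/m(b)$ on the fibres where the common marginal $m(b)$ is positive, and $\pi\equiv 0$ elsewhere---is exactly the standard argument for the discrete case of the gluing lemma in those references, and your handling of the degenerate fibres via the bounds $0\le\pi^{(1)}(a;b)\le m(b)$ and $0\le\pi^{(2)}(b;c)\le m(b)$ closes the one point such proofs often leave implicit; in the discrete setting this replaces the disintegration/regular-conditional-probability machinery needed in the general measure-theoretic version, as you note. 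One thing worth stating explicitly: as printed, the paper's compatibility hypothesis sums both $\pi^{(1)}$ and $\pi^{(2)}$ over $(b_1,\dots,b_d)$, which equates the \emph{first} marginal of $\pi^{(1)}$ (a function of $a$) with the \emph{second} marginal of $\pi^{(2)}$ (a function of $c$)---not a meaningful condition. You silently corrected this to the intended hypothesis $\sum_{a}\pi^{(1)}(a;b)=\sum_{c}\pi^{(2)}(b;c)$ for every $b$, which is the only reading consistent with the lemma's conclusions (summing the first conclusion over $a$ and the second over $c$ shows both sides must equal the middle marginal of $\pi$). Your interpretation is the right one, but in a written solution you should flag the typo rather than repair it tacitly.
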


Let us take $\mu$,  $\nu$ two probability measures and a ground distance of the form
\begin{equation}\label{eq:app}
c((a_1,...,a_d),(b_1,...,b_d))=\sum_{i=1}^d \Delta_i(a_i,b_i).
\end{equation}

We can then define 
\begin{equation}
R(F_1,...,F_d)=\sum_{i=1}^d\left[\sum_{b_1,...,b_{i-1},a_i,...,a_d,b_i}\Delta_i(a_i,b_i)f^{(i)}_{b_1,..,b_{i-1},a_i,..,a_d,b_i} \right],
\end{equation}
where 
\[
F_i=\{f^{(i)}_{b_1,..,b_{i-1},a_i,..,a_d,b_i}\}
\]
are a $N^{d+1}$-plet of real values satisfying the two congruence conditions
\begin{equation}
\label{constartN}
\sum_{b_1}f^{(1)}_{a_1,...,a_d,b_1}=\mu(a_1,...,a_d),
\end{equation}
\begin{equation}
\label{conendN}
\sum_{a_N}f^{(d)}_{b_1,...,b_{d-1},a_d,b_d}=\nu(b_1,...,b_d)
\end{equation}
and the following $d-1$ connection conditions
\begin{equation}
\label{connectionN}
\sum_{a_i}f^{(i)}_{b_1,..,b_{i-1},a_i,...,a_d,b_i}=\sum_{b_{i+1}}f^{(i+1)}_{b_1,..,b_i,a_{i+1},...,a_d,b_{i+1}}
\end{equation}
for $i=1,..,d-1$. We will  call the $d-$plet of $(F_1,...,F_d)$ a flow chart between $\mu$ and $\nu$. 

The set of all possible flow charts between two measures $\mu$ and $\nu$ will be indicate with $\mathcal{F}(\mu,\nu)$.
We will then define
\begin{equation}
\mathcal{R}(\mu,\nu)=\min_{\mathcal{F}(\mu,\nu)}R(F_1,...,F_d).
\end{equation}

\begin{theorem}
	Let $\mu$ and $\nu$ be two probability measures over the grid 
	$G=\{1,...,N\}^d$, $c:G\times G \rightarrow [0,\infty]$ a separable ground distance, $i.e.$  of the form \eqref{eq:app}.
	Then, for each $\pi$ transport plan between $\mu$ and $\nu$ there exists a flow chart $(F_1,..,F_d)$ such that
	\begin{equation}
	R(F_1,..,F_d)=\sum_{G \times G} c(a,b) \pi(a,b) .
	\end{equation}
	In particular
	\begin{equation}
	\mathcal{R}(\mu,\nu)=\mathcal{W}_c(\mu,\nu).
	\end{equation}
\end{theorem}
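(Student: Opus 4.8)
The plan is to establish the two inequalities $\mathcal{R}(\mu,\nu)\le\mathcal{W}_c(\mu,\nu)$ and $\mathcal{R}(\mu,\nu)\ge\mathcal{W}_c(\mu,\nu)$ separately, mirroring the two directions indicated for Theorem~\ref{teoremaequivalenza} in the $d=2$ case. The first (easy) direction is a direct generalization of the computation in the proof of Theorem~\ref{teoremaequivalenza}. Given a transport plan $\pi$, I would define, for each $i=1,\dots,d$,
\[
f^{(i)}_{b_1,\dots,b_{i-1},a_i,\dots,a_d,b_i}:=\sum_{a_1,\dots,a_{i-1}}\ \sum_{b_{i+1},\dots,b_d}\pi(a_1,\dots,a_d;b_1,\dots,b_d),
\]
that is, I marginalize out the already-transported source coordinates $a_1,\dots,a_{i-1}$ and the not-yet-used target coordinates $b_{i+1},\dots,b_d$. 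Verifying \eqref{constartN} and \eqref{conendN} reduces to recovering the marginals $\mu$ and $\nu$ of $\pi$ (the cases $i=1$ and $i=d$, where one of the two index blocks is empty), and the connection conditions \eqref{connectionN} follow because both sides equal $\sum_{a_1,\dots,a_i}\sum_{b_{i+1},\dots,b_d}\pi$. Finally, since $c$ is separable as in \eqref{eq:app} and each $\Delta_i$ depends only on $(a_i,b_i)$, grouping the sum $\sum_{a,b}\Delta_i(a_i,b_i)\pi(a,b)$ by the indices of $f^{(i)}$ produces exactly the $i$-th summand of $R$, whence $R(F_1,\dots,F_d)=\sum_{G\times G}c(a,b)\pi(a,b)$ and thus $\mathcal{R}\le\mathcal{W}_c$.

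For the reverse direction I would read each $F_i$ as a coupling between two copies of the grid: its indices $(b_1,\dots,b_{i-1},a_i,\dots,a_d)$ label an intermediate state in which the first $i-1$ coordinates already sit at their targets while the remaining ones sit at their sources, and $f^{(i)}$ transports this state to $(b_1,\dots,b_i,a_{i+1},\dots,a_d)$ by changing only the $i$-th coordinate from $a_i$ to $b_i$. The crucial remark is that the connection conditions \eqref{connectionN} say precisely that the second marginal of $F_i$ coincides with the first marginal of $F_{i+1}$, which is exactly the hypothesis of the Gluing Lemma~\ref{gluinglemma}. I would therefore apply that lemma $d-1$ times, successively gluing $F_1,F_2,\dots,F_d$ along their shared intermediate marginals, to produce a single probability measure $\Pi$ on the $(d+1)$-fold product of grids whose consecutive two-factor marginals are the $F_i$. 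Marginalizing $\Pi$ onto its first and last factors yields a candidate $\pi$; constraints \eqref{constartN} and \eqref{conendN} guarantee that $\pi$ has marginals $\mu$ and $\nu$, so $\pi\in\Pi(\mu,\nu)$.

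It remains to match the costs, where I would invoke separability once more: along any path $a=(a_1,\dots,a_d)\to(b_1,a_2,\dots,a_d)\to\cdots\to(b_1,\dots,b_d)=b$ that changes one coordinate at a time, the accumulated cost $\sum_{i=1}^d\Delta_i(a_i,b_i)$ telescopes to $c(a,b)$ independently of the intermediate path — the $d$-dimensional analogue of the two-path identity depicted in Figure~\ref{fig2}. Consequently $\sum_{G\times G}c(a,b)\pi(a,b)=\sum_{i=1}^d\E_\Pi[\Delta_i]=R(F_1,\dots,F_d)$, since $F_i$ is the $(i{-}1,i)$-marginal of $\Pi$; this gives $\mathcal{W}_c\le\mathcal{R}$ and hence the claimed equality $\mathcal{R}(\mu,\nu)=\mathcal{W}_c(\mu,\nu)$. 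I expect the main obstacle to be the bookkeeping in the reverse direction: defining the intermediate state spaces cleanly, checking at each step that \eqref{connectionN} matches the hypothesis of the Gluing Lemma, and confirming that the iterated gluing preserves both the endpoint marginals and the per-coordinate cost contributions. The forward direction and the cost telescoping are routine.
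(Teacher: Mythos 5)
Your proposal matches the paper's proof essentially step for step: the same marginalization formula $f^{(i)}_{b_1,\dots,b_{i-1},a_i,\dots,a_d,b_i}=\sum_{a_1,\dots,a_{i-1},b_{i+1},\dots,b_d}\pi$ for the forward direction, and the same iterated application of the Gluing Lemma in the reverse direction (the paper first checks by induction on \eqref{connectionN} that each $F_i$ is a probability measure, then glues $d-1$ times, verifies the endpoint marginals are $\mu$ and $\nu$, and recovers each $f^{(i)}$ as a marginal of the glued measure to match the cost). The bookkeeping you flag as the main obstacle is precisely what the paper's proof carries out, so this is the same argument.
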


\begin{proof}
	Let us consider $\pi$ a transport plan, then we can write
	\begin{multline}
	\sum_{G\times G}c(a,b)\pi(a,b)=\sum_{a_1,...,a_d,b_1,...,b_d}\sum_{i=1}^d \Delta_i (a_i,b_i)\pi(a_1,...,a_d,b_1,...,b_d)=
	\\
	\sum_{i=1}^d\sum_{a_1,...,a_d,b_1,...,b_d}\Delta_i (a_i,b_i)\pi(a_1,...,a_d,b_1,...,b_d)=
	\\
	\sum_{i=1}^d\left[\sum_{b_1,...,b_{i-1},a_i,...,a_d,b_i}\Delta_i(a_i,b_i)f^{(i)}_{b_1,...,b_{i-1},a_i,...,a_d,b_i}\right],
	\end{multline}
	where
	\begin{equation}
	f^{(i)}_{b_1,...,b_{i-1},a_i,...,a_d,b_i}=\sum_{a_1,...,a_{i-1},b_{i+1},...,b_{d}}\pi(a_1,...,a_d,b_1,...,b_d).
	\end{equation}
	To conclude, we have to prove that those $f^{(i)}_{b_1,...,b_{i-1},a_i,...,a_d,b_i}$ satisfy the conditions (\ref{constartN}), (\ref{conendN}) and (\ref{connectionN}).
	
	All of those follow from the definition itself, indeed
	\begin{equation*}
	\sum_{b_1}f^{(1)}_{a_1,...,a_d,b_1}=\sum_{b_1,...,b_d}\pi(a_1,...,a_d,b_1,...,b_d)=\mu(a_1,...,a_d),
	\end{equation*}
	\begin{equation*}
	\sum_{a_d}f^{(d)}_{b_1,...,b_{d-1},a_d,b_d}=\sum_{a_1,...,a_d}\pi(a_1,...,a_d,b_1,...,b_d)=\nu(b_1,...,b_d)
	\end{equation*}
	and
	\begin{multline*}
	\sum_{a_i}f^{(i)}_{b_1,..,b_{i-1},a_i,...,a_d,b_i}=\sum_{a_1,...,a_{i-1},a_i,b_{i+1},...,b_{d}}\pi(a_1,...,a_d,b_1,...,b_d)=
	\\
	=\sum_{b_{i+1}}\sum_{a_1,...,a_i,b_{i+2},...,b_N}\pi(a_1,...,a_N,b_1,...,b_N)=\sum_{b_{i+1}}f^{(i+1)}_{b_1,..,b_i,a_{i+1},...,a_N,b_{i+1}}.
	\end{multline*}
	
	Let now $(F_1,...,F_d)$ be a flow chart. We have that, for each $i=1,...,d$, the $F_i$ define a probability measure over $\{1,...,N\}^{d+1}$. For $i=1$ we easly find that
	\[
	\sum_{a_1,...,a_d,b_1}f^{(1)}_{a_1,...,a_d,b_1}=\sum_{a_1,...,a_d}\sum_{b_1}f^{(1)}_{a_1,...,a_d,b_1}=\sum_{a_1,...,a_d}\mu(a_1,...,a_d)=1.
	\]
	If we assume that $F_i$ is a probability measure, then, using condition (\ref{connectionN}), we get that
	\begin{multline*}
	\sum_{b_1,...,b_i,a_{i+1},...,a_d,b_{i+1}}f^{(i+1)}_{b_1,...,b_i,a_{i+1},...,a_d,b_{i+1}}=\sum_{b_1,...,b_i,a_{i+1},...,a_d}\sum_{b_{i+1}}f^{(i+1)}_{b_1,...,b_i,a_{i+1},...,a_d,b_{i+1}}=\\
	\sum_{b_1,...,b_i,a_{i+1},...,a_d}\sum_{a_i}f^{(i)}_{b_1,..,b_{i-1},a_i,...,a_d,b_i}=1.
	\end{multline*}
	Thus, by induction, we get that all the $F_i$ are actually probability measures.
	
	Since we showed that $f^{(1)}_{a_1,...,a_d,b_1}$ and $f^{(2)}_{b_1,a_2,...,a_d,b_2}$ are both probability measures and relation (\ref{connectionN}) holds 
	we can apply the gluing lemma and find a probability measure $\pi^{(1)}(a_1,...,a_d,b_1,b_2)$ such that
	\begin{equation*}
	\sum_{b_2}\pi^{(1)}(a_1,...,a_d,b_1,b_2)=f^{(1)}_{a_1,...,a_d,b_1}
	\end{equation*}
	and
	\begin{equation*}
	\sum_{a_1}\pi^{(1)}_{a_1,...,a_d,b_1,b_2}=f^{(2)}_{b_1,a_2,...,a_d,b_2}.
	\end{equation*}
	Let us now consider $f^{(3)}_{b_1,b_2,a_3,...,a_d,b_3}$ and $\pi^{(1)}(a_1,...,a_d,b_1,b_2)$, we have
	\begin{equation*}
	\sum_{a_2} \sum_{a_1}\pi^{(1)}(a_1,...,a_d,b_1,b_2)=\sum_{a_2}f^{(2)}_{b_1,a_2,...,a_d,b_2}=\sum_{b_3}f^{(3)}_{b_1,b_2,a_3,...,a_d,b_3},
	\end{equation*}
	so we can apply once again the gluing lemma and find a probability measure $\pi^{(2)}(a_1,...,a_d,b_1,b_2,b_3)$ such that
	\begin{equation*}
	\sum_{b_3}\pi^{(2)}(a_1,...,a_d,b_1,b_2,b_3)=\pi^{(1)}(a_1,...,a_d,b_1,b_2)
	\end{equation*}
	and
	\begin{equation*}
	\sum_{a_1,a_2}\pi^{(2)}(a_1,...,a_d,b_1,b_2,b_3)=f^{(3)}_{b_1,b_2,a_3,...,a_d,b_3}.
	\end{equation*}
	We can iterate this process for $d-1$ times and find a probability measure $\pi_{a_1,...,a_d,b_1,...,b_d}$ such that
	\begin{multline*}
	\sum_{b_1,...,b_d}\pi(a_1,...,a_d,b_1,...,b_d)=\sum_{b_1,...,b_{d-1}}\sum_{b_d}\pi(a_1,...,a_d,b_1,...,b_d)=\sum_{b_1,...,b_{N-1}}\pi^{(N-2)}(a_1,...,a_N,b_1,...,b_{N-1})=
	\\
	...=\sum_{b_1}\sum_{b_2}\pi^{(1)}(a_1,...,a_N,b_1,b_2)=\sum_{b_1}f^{(1)}(a_1,...,a_N,b_1)=\mu(a_1,...,a_N).
	\end{multline*}
	Similarly, we have
	\begin{equation*}
	\sum_{a_1,...,a_d}\pi(a_1,...,a_d,b_1,...,b_d)=\nu(b_1,...,b_d),
	\end{equation*}
	thus proving that $\pi$ transports $\mu$ into $\nu$.
	
	For such a $\pi$, we now prove that
	\begin{equation*}
	R(F_1,...,F_d) = \sum_{a_1,...,a_d,b_1,...,b_d}\sum_{i=1}^d\Delta_i(a_i,b_i)\pi(a_1,...,a_d,b_1,...,b_d).
	\end{equation*}
	We start with
	\begin{equation*}
	\sum_{a_1,...,a_d,b_1,...,b_d}\sum_{i=1}^d\Delta_i(a_i,b_i)\pi(a_1,...,a_d,b_1,...,b_d)=\sum_{i=1}^d \sum_{a_1,...,a_d,b_1,...,b_d}\Delta_i(a_i,b_i)\pi(a_1,...,a_d,b_1,...,b_d).
	\end{equation*}
	Let us consider the term
	\begin{multline*}
	\sum_{a_1,...,a_d,b_1,...,b_d}\Delta_i(a_i,b_i)\pi(a_1,...,a_d,b_1,...,b_d)=\\
	\sum_{b_1,...,b_{i-1},a_i,...,a_d,b_i}\Delta_i(a_i,b_i)\sum_{a_1,...,a_{i-1},b_i,...,b_d}\pi(a_1,...,a_d,b_1,...,b_d)
	\end{multline*}
	but, thanks to the Gluing Lemma, we have that
	\begin{multline*}
	\sum_{a_1,...,a_{i-1},b_i,...,b_d}\pi(a_1,...,a_d,b_1,...,b_d)=\sum_{a_1,...,a_{i-1},b_i,...,b_{d-1}}\pi^{(d-2)}(a_1,...,a_d,b_1,...,b_{d-1})=...=
	\\
	\sum_{a_1,...,a_{i-1}}\pi^{(i+1)}(a_1,...,a_N,b_1,...,b_{i}) = f^{(i)}_{b_1,...,b_{i-1},a_i,...,a_N,b_i}.
	\end{multline*}
	
	So the proof is complete.
\end{proof}

\end{document}